\newtheorem{theorem}{Theorem}[section]
\newtheorem{lemma}[theorem]{Lemma}
\newtheorem{corollary}[theorem]{Corollary}
\newtheorem{remark}{Remark}
\newtheorem{example}{Example}[section]
\newcommand{\B}{\ensuremath{\mathcal{B}}}
\newcommand{\fb}[1]{\makebox(40,16){#1}} 
\newcommand{\fbb}[1]{\framebox(40,16){#1}} 
\newcommand{\fbbl}[1]{\framebox(80,16){#1}} 
\newcommand{\third}{\ensuremath{\mathsf{third}}} 
\newcommand{\STS}{\ensuremath{\mathsf{STS}}}
\title{Block-avoiding point sequencings of arbitrary length in Steiner triple systems}
\author[1]{Douglas R.\ Stinson%
\thanks{D.R.\ Stinson's research is supported by  NSERC discovery grant RGPIN-03882.}}
\author[2]{Shannon Veitch}
\affil[1]{David R.\ Cheriton School of Computer Science, University of Waterloo,
Waterloo, Ontario, N2L 3G1, Canada}
\affil[2]{Department of Combinatorics and Optimization, University of Waterloo,
Waterloo, Ontario, N2L 3G1, Canada}
\begin{document}
\maketitle

\begin{abstract}
An \emph{$\ell$-good sequencing} of an \STS$(v)$ is a permutation of the points of the design such that no $\ell$ consecutive points in this permutation contain a block of the design. We prove that, for every integer $\ell \geq 3$, there is an $\ell$-good sequencing of any \STS$(v)$ provided that $v$ is sufficiently large. We also prove some new nonexistence results for $\ell$-good sequencings of  \STS$(v)$.
\end{abstract}

\section{Introduction}
\label{intro.sec}

A \emph{Steiner triple system of order $v$} is a pair $(X, \B)$, where 
$X$ is a set of $v$ \emph{points} and $\B$ is a set of 3-subsets of $X$ (called 
\emph{blocks}), such that every pair of points occur in exactly one block.
We will abbreviate the phrase ``Steiner triple system of order $v$'' to 
\STS$(v)$. It is well-known that an \STS$(v)$ contains exactly $v(v-1)/6$ blocks, and an 
\STS$(v)$ exists if and only  if $v \equiv 1,3 \bmod 6$. 
The definitive reference for Steiner triple systems is the book \cite{CR} by Colbourn and Rosa.

The following problem was introduced by Kreher and Stinson in \cite{KS2}.
Suppose $(X, \B)$ is an \STS$(v)$ 
 and let $\ell \geq 3$ be an integer. 
An \emph{$\ell$-good sequencing} of $(X, \B)$ is a permutation
$\pi = [x_1\; x_2 \;  \cdots \;  x_v]$ of $X$ 
such that no $\ell$ consecutive points in the permutation contain a block in $\B$.
(Some related but different sequencing problems for \STS$(v)$ are studied in \cite{AKP} and \cite{KS}.)

\begin{remark}
\label{rem1}
{\rm 
We observe that an $\ell$-good sequencing is automatically an $m$-good sequencing if $m < \ell$.}
\end{remark}

It is an interesting question if there exists, for a given integer $\ell \geq 3$, an 
$\ell$-good sequencing of a specified \STS$(v)$, or if there exists an $\ell$-good sequencing of all \STS$(v)$ 
(for sufficiently large values of $v$).  The following  results were proven in \cite{KS2}:
\begin{itemize}
\item any \STS$(v)$ with $v > 3$ has a $3$-good sequencing,
\item any \STS$(v)$ with $v > 71$ has a $4$-good sequencing,
\item the (unique) \STS$(7)$ and \STS$(9)$ do not have a $4$-good sequencing, and 
\item all \STS$(13)$ and \STS$(15)$ have a $4$-good sequencing.
\end{itemize}

It was  conjectured in  \cite{KS2}, for any integer $\ell \geq 3$, that  there exists an 
integer $n(\ell)$ such that any \STS$(v)$ with $v > n(\ell)$ has an $\ell$-good sequencing. 
We prove this conjecture in  Section \ref{exist.sec} of this paper and we show that $n(\ell) \in O(\ell^6)$.
We also prove a nonexistence result, in Section \ref{counting.sec}, namely, that an \STS$(v)$ with $v > 7$ 
cannot have an $\ell$-good sequencing if $\ell \geq (v + 2)/3$.

We will use the following notation in the remainder of this paper. Suppose $(X,\B)$ is an \STS$(v)$.
Then, for any pair of points $x,y$, 
let $\third(x,y) = z$ if and only if $\{x,y,z\} \in \B$.
The function $\third$ is well-defined because every pair of points occurs in a 
unique block in $\B$.

\section{A counting argument}
\label{counting.sec}

In this section, we generalize a counting argument from \cite[\S 3.1]{KS2} that was used to prove the nonexistence of $4$-good sequencings of \STS$(7)$ and \STS$(9)$.
Let $v \geq 7$ and $\ell \geq 3$ be integers. Suppose we take the points of an $\STS(v)$ to be $1, \dots, v$. Without loss of generality, suppose, by relabelling points if necessary, that $[1 \; 2 \; 3 \; \cdots \; v]$ is an $\ell$-good sequencing of an $\STS(v)$. We say that a block $B$ is of \textit{type i} if $| B \cap \{1,2, \dots, \ell\}| = i$. Clearly, we must have $i \in \{0,1,2\}$.

For $i = 0,1,2$, let $b_i$ denote the number of blocks of type $i$. Since the sequencing is $\ell$-good, we know that $b_2 = \binom{\ell}{2}$. Since each point appears in $(v-1)/2$ blocks, we have
\[b_1 = \ell \left( \frac{v-1}{2} - (\ell-1) \right).\]
Finally, because the total number of blocks is $v(v-1)/6$, we have
\begin{eqnarray*}
b_0 &=& \frac{v(v-1)}{6} - \ell \left( \frac{v-1}{2} - (\ell-1) \right) - \binom{\ell}{2} \\
&=& \frac{v(v-1)}{6} - \frac{\ell (v - \ell)}{2}.
\end{eqnarray*}

Consider a block of type 0, say $B = \{x,y,z\}$ where $x < y < z$. We must have $x \leq v - \ell$ because otherwise $B \subseteq \{v-\ell+1, \dots, v-2, v-1, v\}$. Since $B$ is of type 0, we also have that $x \geq \ell + 1$. For each such $x$ such that $\ell + 1 \leq x \leq v - \ell$,
 we have $z\in \{ x + \ell, \dots, v-1, v\}$, so there are $v-(x+\ell-1)$ possible values for $z$. 
It follows that there can be at most
\[
\sum_{x = \ell + 1}^{v-\ell} (v-(x+\ell-1)) 
= \frac{(v-2\ell)(v-2\ell + 1)}{2}
\]
blocks of type $0$.
Since there are $b_0 = v(v-1)/6 - \ell (v - \ell)/2$ blocks of type $0$, we obtain
\[ \frac{v(v-1)}{6} - \frac{\ell (v - \ell)}{2} \leq \frac{(v-2\ell)(v-2\ell + 1)}{2},\]
which simplifies to give \[ 0 \leq (3\ell-2v)(3\ell - v - 2).\]
We are assuming $v \geq 7$, so $(v+2)/3 + 1 < 2v/3$. Hence, $\ell \leq (v+2)/3$ or $\ell \geq 2v/3$. 
Therefore there does not exist a $(\lfloor (v+2)/3\rfloor + 1)$-good sequencing of an \STS$(v)$. 
Then, it follows from Remark \ref{rem1} that we cannot have an $\ell$-good sequencing with $\ell \geq 2v/3$.

Summarizing  the above discussion, we have the following theorem.

\begin{theorem}
\label{nonexist.thm}
If an $\STS(v)$ with $v \geq 7$ has an $\ell$-good sequencing,  then
$
\ell \leq (v+2)/3.
$
\end{theorem}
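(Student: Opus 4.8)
The plan is to run a double-counting argument on a single, conveniently relabelled sequencing. First I would exploit the freedom to rename points: assuming an $\ell$-good sequencing exists, I may take the point set to be $\{1,\dots,v\}$ and assume that $[1\;2\;\cdots\;v]$ is itself $\ell$-good. Every block then meets the initial segment $\{1,\dots,\ell\}$ in $0$, $1$, or $2$ points, and never in $3$, since three points of the segment would put an entire block inside $\ell$ consecutive positions. Writing $b_i$ for the number of blocks of type $i$, I would pin down $b_2 = \binom{\ell}{2}$ (each pair inside $\{1,\dots,\ell\}$ lies in a distinct block, whose third point must escape the segment by $\ell$-goodness), and then obtain $b_1 = \ell\left((v-1)/2 - (\ell-1)\right)$ by summing the $(v-1)/2$ blocks through each of the $\ell$ initial points and discarding the type-$2$ contributions. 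Subtracting both from the global count $v(v-1)/6$ gives the exact value $b_0 = v(v-1)/6 - \ell(v-\ell)/2$.

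The crux is a matching \emph{upper} bound on $b_0$. Here I would parametrise a type-$0$ block $\{x,y,z\}$ with $x<y<z$ by its extreme points $(x,z)$: since the remaining point is forced to be $\third(x,z)$, distinct type-$0$ blocks yield distinct pairs $(x,z)$, so it suffices to count admissible pairs. Type-$0$ forces $x \geq \ell+1$, while $\ell$-goodness forces the span $z-x \geq \ell$ (otherwise the block sits inside $\ell$ consecutive positions), which in turn forces $x \leq v-\ell$. For each such $x$ there are $v-(x+\ell-1)$ choices of $z$, and $\sum_{x=\ell+1}^{v-\ell}\left(v-(x+\ell-1)\right)$ is just $1 + 2 + \cdots + (v-2\ell)$, i.e.\ $(v-2\ell)(v-2\ell+1)/2$. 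Comparing this with the exact value of $b_0$ produces the key inequality, which I expect to rearrange into $0 \leq (3\ell-2v)(3\ell-v-2)$.

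Finally I would read off the constraint on $\ell$ from the sign of that product. Its linear factors vanish at $\ell = 2v/3$ and $\ell = (v+2)/3$, so nonnegativity of the product forces either $\ell \leq (v+2)/3$ or $\ell \geq 2v/3$. I anticipate that the one genuine obstacle is closing this disjunction, because the counting bound by itself does not forbid large $\ell$; it only excludes the intermediate gap $(v+2)/3 < \ell < 2v/3$. To discard the spurious branch $\ell \geq 2v/3$, I would apply the entire argument to the single value $\ell^\ast = \lfloor (v+2)/3\rfloor + 1$. Since $v \geq 7$ guarantees $(v+2)/3 < \ell^\ast < 2v/3$, this $\ell^\ast$ lies strictly inside the forbidden gap, so no $\ell^\ast$-good sequencing can exist. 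Remark \ref{rem1} then propagates this upward: any $\ell$-good sequencing with $\ell \geq \ell^\ast$ would in particular be $\ell^\ast$-good, a contradiction. Hence every $\ell$ admitting a good sequencing satisfies $\ell \leq \lfloor(v+2)/3\rfloor \leq (v+2)/3$, which is the claim; the hypothesis $v \geq 7$ is used precisely to ensure the gap is wide enough to capture the integer $\ell^\ast$.
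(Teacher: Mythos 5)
Your proposal is correct and follows essentially the same argument as the paper: the same type-$0$/$1$/$2$ block count, the same upper bound $(v-2\ell)(v-2\ell+1)/2$ on type-$0$ blocks, the same factored inequality, and the same use of $\ell^\ast=\lfloor(v+2)/3\rfloor+1$ together with Remark \ref{rem1} to eliminate the branch $\ell\geq 2v/3$. No substantive differences.
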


By analyzing the case of equality in Theorem \ref{nonexist.thm} more carefully, we can rule out the existence of
an $\ell$-good sequencing of an $\STS(3\ell-2)$ whenever $\ell > 3$ is odd (note that $\ell$ must be odd for an $\STS(3\ell-2)$ to exist). 

\begin{theorem}
\label{nonexist2.thm}
If $\ell > 3$ is an odd integer, then no $\STS(3\ell-2)$ has an $\ell$-good sequencing.
\end{theorem}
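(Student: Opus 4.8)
The plan is to show that equality in Theorem \ref{nonexist.thm} (which holds precisely when $v = 3\ell-2$) forces so much structure on the blocks through the point $\ell+1$ that the $\ell$-goodness condition becomes self-contradictory. Throughout I assume, exactly as in Section \ref{counting.sec}, that $[1\;2\;\cdots\;v]$ is an $\ell$-good sequencing of an $\STS(3\ell-2)$, so that $b_0 = \binom{\ell-1}{2}$. Since this value coincides with the upper bound $(v-2\ell)(v-2\ell+1)/2$ derived for $b_0$ in the proof of Theorem \ref{nonexist.thm}, every inequality used in that derivation must in fact hold with equality, and I would exploit this rigidity.

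First I would read off the local structure at the point $\ell+1$. The bound on $b_0$ was obtained by summing, over each admissible smallest point $x$, the number $v-(x+\ell-1)$ of admissible largest points $z$; equality forces every admissible pair $(x,z)$ to actually be the pair of extreme points of a type-$0$ block. Taking $x=\ell+1$, the smallest point that can lie in a type-$0$ block, produces exactly $v-2\ell = \ell-2$ such blocks, each having $\ell+1$ as its minimum. Because $\ell+1$ is the smallest point outside the window $\{1,\dots,\ell\}$, it can occur in a type-$0$ block only as the minimum, so it lies in precisely $\ell-2$ type-$0$ blocks.

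Next I would count the blocks through $\ell+1$ by type. This point lies in $(v-1)/2 = 3(\ell-1)/2$ blocks altogether, of which $\ell-2$ are type-$0$, leaving $(\ell+1)/2$ blocks of type $1$ or $2$. Looking at the $\ell$ pairs $\{\ell+1,a\}$ with $a\in\{1,\dots,\ell\}$, and noting that a type-$2$ block through $\ell+1$ accounts for two such pairs while a type-$1$ block accounts for one, I obtain $2t_2 + t_1 = \ell$, where $t_1,t_2$ count the type-$1$ and type-$2$ blocks through $\ell+1$. Combined with $t_1+t_2 = (\ell+1)/2$ this gives $t_2 = (\ell-1)/2$.

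The decisive step, which is exactly the ingredient not used in Theorem \ref{nonexist.thm}, is to apply the $\ell$-goodness condition to these type-$2$ blocks themselves. Such a block has the form $\{a,b,\ell+1\}$ with $a<b\leq\ell$, and its largest and smallest elements are $\ell+1$ and $a$; if $a\geq 2$ then all three points lie in the window $\{a,\dots,a+\ell-1\}$, contradicting $\ell$-goodness. Hence $a=1$, so every type-$2$ block through $\ell+1$ contains the pair $\{1,\ell+1\}$, and there can be at most one of them: $t_2\leq 1$. Therefore $(\ell-1)/2 \leq 1$, forcing $\ell\leq 3$ and contradicting $\ell>3$. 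The only real obstacle I anticipate is the bookkeeping in the second paragraph, namely a careful verification that equality in the $b_0$-bound pins down the number of type-$0$ blocks through $\ell+1$ exactly; once that single number is known, the remaining degree counting and the window argument are entirely elementary.
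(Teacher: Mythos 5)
Your proposal is correct and follows essentially the same route as the paper: use equality in the type-$0$ count to pin down that the point $\ell+1$ lies in exactly $\ell-2$ type-$0$ blocks, deduce that it lies in $(\ell-1)/2$ type-$2$ blocks, and then observe that $\ell$-goodness forces every type-$2$ block through $\ell+1$ to contain the point $1$, so there can be at most one of them. The only cosmetic difference is that you count the pairs $\{\ell+1,a\}$ with $a\leq\ell$ where the paper counts the pairs $\{\ell+1,w\}$ with $w\geq\ell+2$; both yield the same conclusion.
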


\begin{proof}
Suppose, by way of contradiction, that there is an $\ell$-good sequencing of an 
$\STS(3\ell-2)$ for some $\ell > 3$.
From the proof of Theorem \ref{nonexist.thm}, 
there are $v-2\ell = \ell-2$ blocks of type $0$ that contain the point $\ell+1$. 
Within these  $\ell-2$ blocks, the point $\ell+1$ occurs with $2\ell-4$ other points in the set 
$\{ \ell+2, \dots , v\}$, which has cardinality $2\ell-3$. It follows that 
the point $\ell+1$ must occur in exactly one block of type $1$.

Since every point occurs in exactly $(v-1)/2$ blocks, the point $\ell+1$ must occur in
\[  \frac{v-1}{2} - (\ell-2) - 1 = \frac{\ell-1}{2}\] blocks of type $2$.
We have assumed $\ell > 3$, so the point $\ell+1$ must occur in at least two blocks of type $2$.
However, if the point $\ell+1$ occurs in a block $B$ of type $2$, then $1 \in B$ 
(otherwise, the sequencing is not
$\ell$-good). But the pair $\{ 1, \ell +1 \}$ is only contained in one block, so we have a contradiction.
\end{proof}

\begin{example}
\label{13.exam}
{\rm Consider an $\STS(13)$. Here, we have that
$(13 + 2)/3 = 5$. 
Theorem \ref{nonexist.thm} tells us that there is no 6-good sequencing of an $\STS(13)$,
and Theorem \ref{nonexist2.thm} extends this to show that no $\STS(13)$ has a 5-good sequencing. 
Similarly, because $(19 + 2)/3 =7$, there is no 7-good sequencing of an $\STS(19)$.}
\end{example}

\section{Existence of $\ell$-good sequencings}
\label{exist.sec}

For any integer $\ell \geq 3$, it was conjectured in \cite{KS2} that all
``sufficiently large'' \STS$(v)$ have $\ell$-good sequencings. 
The conjecture was proven for $\ell = 3$ and $\ell = 4$ in \cite{KS2}. 
Here, we prove the conjecture for all $\ell \geq 3$.

We use a greedy strategy 
similar to the algorithms discussed in \cite{KS2}. The idea is to successively choose 
$x_1, \dots , x_{v}$ in such a way that we end up with an $\ell$-good sequencing of a given \STS$(v)$.
However, this strategy is too simple to guarantee success, so we need to incorporate some 
modifications that we will discuss subsequently.

In general, when we choose a value for $x_i$, it must be distinct from 
$x_1, \dots , x_{i-1}$, of course. It is also required that
\begin{equation}
\label{third.eq}
 x_i \not\in P_{i,\ell} = \{ \third(x_{j},x_{k}) : i-\ell+1 \leq j < k \leq i-1\}.
\end{equation}
Note that $|P_{i,\ell}| \leq \binom{\ell-1}{2}$. For ease of notation in the rest of this section, we will define $L = \binom{\ell-1}{2}$.

There will be a permissible choice for $x_i$ provided that
$i-1 + L \leq v-1$, which is equivalent to the condition $i \leq v- L$.
Thus we can define $x_1, x_2, \dots , x_{v- L}$ in such a way that they satisfy the relevant 
conditions---this is what we term the ``greedy strategy.'' 
Our task is then to somehow fill in the last $L$ positions of
the sequencing, after appropriate modifications, to satisfy the desired properties.
We describe how to do this now, for sufficiently large values of $v$.

Suppose that $[x_1\; x_2\; \cdots \; x_{v-L}]$ is an \emph{$\ell$-good partial sequencing} of
$X = \{1, \dots ,v\}$ (that is, there is no block contained in any $\ell$ consecutive points in 
the sequence $[x_1\; x_2\; \cdots \; x_{v-L}]$). 
Let \[X \setminus \{ x_1, x_2, \dots , x_{v-L}\} = \{ \alpha_1, \dots , \alpha_L \}.\]
Suppose we temporarily define $x_{v-L+i} = \alpha_i$ for $1 \leq i \leq L$.

\subsection{Segments}

We will construct $L$ disjoint \emph{segments}, denoted $\mathcal{S}_{i}$, $1 \leq {i} \leq L$.
Each segment $\mathcal{S}_i$ will consist of 
\begin{itemize}
\item for $i \geq 2$, a \emph{left buffer}, $\mathcal{B}_{i}^L$ 
(however, we will not require a left buffer for the first segment),
\item a \emph{core} denoted by $\mathcal{C}_{i}$, 
\item a \emph{right buffer}, $\mathcal{B}_{i}^R$, and 
\item an \emph{overflow}, $\mathcal{O}_{i}$.
\end{itemize} 
The above are all \emph{ordered} lists of points in the \STS$(v)$. See Figure \ref{segment.fig}.

\begin{figure}
\begin{center}
\fb{$\mathcal{S}_i= $}\fbb{$\mathcal{B}_{i}^L$}\fbb{$\mathcal{C}_{i}$}\fbb{$\mathcal{B}_{i}^R$}\fbb{$\mathcal{O}_{i}$}
\end{center}
\caption{A segment  $\mathcal{S}_i$}
\label{segment.fig}
\end{figure}

Each buffer has size $\ell-1$ (except that the first left buffer has size $0$) 
and the size of the core will be denoted by $c_{i}$. We will discuss the value of $c_i$ and the 
size of the the overflow a bit later. 
The basic strategy of our algorithm will be to (if necessary) swap each $\alpha_i$ with either
\begin{description}
\item[(a)] one of $\alpha_{i+1}, \dots , \alpha_L$ (there are $L-i$ choices here), or
\item[(b)] a point  from the core $\mathcal{C}_i$ (there are $c_i$ choices for such a point).
\end{description} 
We will perform a sequence of swaps of this type, for $i = 1, 2, \dots ,  L$.

When we perform a swap $\alpha_i \leftrightarrow x_j \in \mathcal{C}_{i}$, 
we need to ensure that two conditions are satisfied:
 \begin{enumerate}
 \item $x_j \not\in P_{v-L+i,\ell}$ (from (\ref{third.eq})), and 
 \item $\alpha_i$ does not lead to the formation of  a new block among any $\ell$ consecutive points in 
 $\mathcal{S}_i$.
 \end{enumerate}

 \subsection{The core}
 
 First, we consider how big  the core $\mathcal{C}_i$ needs to be.
  When we are defining $x_{v-L+i}$, if we have $L+1$ choices, then one of them must be good (i.e., not in the set
$P_{v-L+i, \ell}$). 
 The number of choices in \textbf{(a)} or \textbf{(b)} 
 is $c_i + L-i + 1$, so we want $c_i + L-i + 1 \geq L+1$, or $c_i \geq i$, for $1 \leq i \leq  L$.
 (The  ``$+1$'' term on the left side of the inequality 
 accounts for the possibility that $\alpha_i$ might already be a good choice, 
 in which case no swap would be necessary.) Thus, from this point on, we will assume that $c_i = i$ for all $i$.
 
 \subsection{The overflow}
 
 Define $\mathcal{T}_i = 
 \mathcal{B}_i^L \cup \mathcal{C}_i \cup \mathcal{B}_i^R$. 
  We  need to ensure that there are no blocks contained in $\ell$ consecutive points of 
 $\mathcal{S}_i$ after a point 
 $\alpha_i$ is swapped for a point in $\mathcal{C}_i$. 
This is accomplished by considering 
 blocks containing two points in $\mathcal{T}_i$ and placing the relevant third points 
 ``out of harm's way''  in the overflow.

 For now, we assume that $i \geq 2$. 
 We only need to consider blocks contained in
 $\ell$ consecutive points in $\mathcal{T}_i$, because 
 \begin{itemize}
 \item
 the last point in the core and the first point in the overflow are not contained in $\ell$ consecutive points, 
 and 
\item
 for $i \geq 2$, the first point in the core and the last point in the previous overflow are not contained in $\ell$ consecutive points.
 \end{itemize} 
 
 Denote the points (in order) in $\mathcal{T}_i$ by $z_1, \dots , z_{i + 2\ell - 2}$.
Define $\mathcal{J}_i$ to consist of all the ordered pairs $(j_1,j_2)$ such that 
 \begin{itemize}
 \item $1 \leq j_1 < j_2 \leq i + 2\ell - 2$ and
 \item $j_2-j_1 \leq \ell-1$.
 \end{itemize}

\begin{lemma} 
\label{J.lemma}
For  $2 \leq i \leq L$, we have
$|\mathcal{J}_i| =(\ell-1)(i + (3\ell-4)/2)$. 
Also, $|\mathcal{J}_1| = \binom{\ell}{2}$.
\end{lemma}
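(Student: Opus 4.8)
The plan is to count $\mathcal{J}_i$ by grouping its pairs according to the gap $d = j_2 - j_1$. The first step is simply to record the size of $\mathcal{T}_i$. For $i \geq 2$, the set $\mathcal{T}_i = \mathcal{B}_i^L \cup \mathcal{C}_i \cup \mathcal{B}_i^R$ consists of a left buffer of size $\ell-1$, a core of size $c_i = i$, and a right buffer of size $\ell-1$; hence $\mathcal{T}_i$ has $n := i + 2\ell - 2$ points, which matches the indexing $z_1, \dots , z_{i+2\ell-2}$ in the statement.

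Next, I would fix a gap $d$ with $1 \leq d \leq \ell-1$ and count the ordered pairs $(j_1,j_2)$ in $\mathcal{J}_i$ with $j_2 - j_1 = d$. These are exactly the pairs with $j_1 \in \{1, \dots , n-d\}$, so there are $n - d$ of them, provided $d \leq n-1$. Since $n = i + 2\ell - 2 \geq 2\ell > \ell - 1$ when $i \geq 2$ and $\ell \geq 3$, every permissible gap is actually realized, and summing over $d$ gives
\[
|\mathcal{J}_i| = \sum_{d=1}^{\ell-1}(n-d) = (\ell-1)\,n - \binom{\ell}{2}.
\]
Substituting $n = i + 2\ell - 2$ and simplifying then yields $(\ell-1)(i + (3\ell-4)/2)$, as required.

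The only point that needs care is the base case $i = 1$, which must be treated separately because the first segment carries no left buffer. In that case $\mathcal{T}_1 = \mathcal{C}_1 \cup \mathcal{B}_1^R$ has just $1 + (\ell-1) = \ell$ points, so the same gap-counting argument gives $|\mathcal{J}_1| = \sum_{d=1}^{\ell-1}(\ell - d) = \binom{\ell}{2}$. This is precisely why the $i=1$ value differs from what one obtains by naively substituting $i = 1$ into the general formula, and it is really the main thing to watch; everything else is a routine summation.
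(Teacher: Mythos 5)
Your proposal is correct and follows essentially the same route as the paper: count the pairs with fixed gap $d = j_2 - j_1$ (there are $i + 2\ell - 2 - d$ of them), sum over $1 \leq d \leq \ell-1$, and handle $i = 1$ separately because $|\mathcal{T}_1| = \ell$. Your added check that every gap $d \leq \ell-1$ is actually realized is a small bonus of rigour the paper omits, but the argument is otherwise identical.
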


\begin{proof}
First, assume $2 \leq i \leq L$. Let $1 \leq d \leq \ell-1$.  There are exactly $i + 2\ell-2 - d$ pairs 
$(j_1,j_2) \in \mathcal{J}_i$ with $j_2 - j_1 = d$. Hence,
\begin{eqnarray*}
|\mathcal{J}_i| & = & \sum_{d=1}^{\ell-1} (i + 2\ell-2 - d)\\
& = & (\ell-1) (i + 2(\ell-1)) - \frac{\ell(\ell-1)}{2} \\
& = & (\ell-1) \left(i + \frac{3\ell-4}{2} \right).
\end{eqnarray*}

Now let's look at the initial case,  $i=1$. Here, we have $\mathcal{T}_1 = 
\mathcal{C}_1 \cup \mathcal{B}_1^R$, so $|\mathcal{T}_1| = \ell$.
$\mathcal{J}_1$ will will consist of all $\binom{\ell}{2}$ ordered pairs
$(j_1,j_2)$ such that 
$1 \leq j_1 < j_2 \leq \ell$.
\end{proof}

Next, define \[Y = \big\{ \third(z_{j_1},z_{j_2}): (j_1,j_2) \in \mathcal{J}_i \big\} 
\setminus (\mathcal{T}_i \cup \mathcal{S}_1 \cup \cdots \cup \mathcal{S}_{i-1}).\]
Note that, when we define $Y$ we  omit any points $\third(z_{j_1},z_{j_2})$ that have 
already appeared in $\mathcal{T}_i \cup \mathcal{S}_1 \cup \cdots \cup \mathcal{S}_{i-1}$.
Denote the points in $Y$ as $y_1, \dots , y_m$. Clearly, $m \leq |\mathcal{J}_i|$.

Having already chosen the points in $\mathcal{T}_i$, we want to ``pre-specify'' the location of the $m$ 
points $y_1, \dots , y_m$ in the overflow $\mathcal{O}_{i}$. This is done according to the algorithm in Figure 
\ref{overflow.fig}.  We should explain the spacing of points $Y = \{y_1, \dots , y_m\}$ in the overflow. 
We want to avoid a situation where there could be three points (within $\ell$ consecutive points)
that might comprise a block.  The initial gap of length $\ell-2$ ensures that the last two points of 
$\mathcal{B}_i^R$ and $y_1$ are not contained in $\ell$ consecutive points. 
Also, the remaining gaps are large enough to guarantee that no three points $y_i$, $y_{i+1}$ and $y_{i+2}$ are  contained in $\ell$ consecutive points.

\begin{figure}[tb]
\begin{center}
\begin{tabular}{l}
{\bf Input}: the set $Y = \{y_1, \dots , y_m\}$ and an integer $i$, $1 \leq i \leq L$\\
Insert the points in $Y$ into $\mathcal{O}_{i}$ as follows:\\
{\bf if} $\ell$ is even {\bf then}\\
\quad\quad leave an initial gap of length $\ell-2$ and then insert $y_1$\\
\quad\quad {\bf for} $2 \leq i \leq m$ {\bf do}\\
\quad\quad\quad\quad leave a gap of length $(\ell-2)/2$ between $y_{i-1}$ and $y_i$\\
{\bf else} (i.e., $\ell$ is even) \\
\quad\quad leave an initial gap of length $\ell-2$ and then insert $y_1$\\
\quad\quad {\bf for} $2 \leq i \leq m$ {\bf do}\\
\quad\quad\quad\quad {\bf if} $i$ is even {\bf then} \\
\quad\quad\quad\quad\quad\quad leave a gap of length $(\ell-3)/2$ between $y_{i-1}$ and $y_i$\\
\quad\quad\quad\quad {\bf else} (i.e., $i$ is odd) \\
\quad\quad\quad\quad\quad\quad  leave a gap of length $(\ell-1)/2$ between $y_{i-1}$ and $y_i$
\end{tabular}
\end{center}
\caption{Pre-specifying elements in the overflow $\mathcal{O}_{i}$}
\label{overflow.fig}
\end{figure}

We can now compute the length of an overflow. 

\begin{lemma} 
\label{overflow.lem}
For any integer $i$ such that  $1 \leq i \leq L$, we have $|\mathcal{O}_i| \leq \ell(|\mathcal{J}_i| + 1)/2 - 1$.
\end{lemma}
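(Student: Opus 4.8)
The plan is to compute the length of $\mathcal{O}_i$ exactly as a function of $m = |Y|$ straight from the insertion rule of Figure \ref{overflow.fig}, and then feed in the already-established bound $m \leq |\mathcal{J}_i|$. Observe that $|\mathcal{O}_i|$ equals the number of pre-specified points (namely $m$) plus the total number of positions consumed by the gaps, so the whole problem reduces to summing the prescribed gap lengths, one initial gap plus $m-1$ interior gaps. The argument therefore splits along the two branches of the algorithm, according to the parity of $\ell$.

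First I would dispose of the case in which $\ell$ is even. Here every interior gap has the same length $(\ell-2)/2$ and the initial gap has length $\ell-2$, so
\[
|\mathcal{O}_i| = (\ell-2) + m + (m-1)\,\frac{\ell-2}{2} = (\ell-1) + (m-1)\,\frac{\ell}{2} = \frac{\ell(m+1)}{2} - 1,
\]
which already matches the target expression (with $m$ in place of $|\mathcal{J}_i|$), indeed with equality.

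The more delicate branch is $\ell$ odd, where the interior gaps alternate between length $(\ell-3)/2$ at even indices and $(\ell-1)/2$ at odd indices. Here I would subdivide once more according to the parity of $m$, since the number of gaps of each size depends on it: among the indices $2, \dots, m$ there are exactly $\lfloor m/2 \rfloor$ even indices and $(m-1) - \lfloor m/2 \rfloor$ odd ones. Carrying out the two subcomputations, I expect to find that when $m$ is odd the length equals $\ell(m+1)/2 - 1$ exactly, while when $m$ is even it is smaller by $\tfrac12$; in either subcase $|\mathcal{O}_i| \leq \ell(m+1)/2 - 1$. This counting — keeping the tally of each gap length correct as a function of the parity of $m$ — is the step where an off-by-one error is easiest to commit, and I regard it as the main obstacle; the surrounding arithmetic is routine.

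Having established $|\mathcal{O}_i| \leq \ell(m+1)/2 - 1$ in every case, I would close the argument by noting that $m \leq |\mathcal{J}_i|$ and that $f(t) = \ell(t+1)/2 - 1$ is increasing in $t$, so $\ell(m+1)/2 - 1 \leq \ell(|\mathcal{J}_i|+1)/2 - 1$. This yields the claimed bound $|\mathcal{O}_i| \leq \ell(|\mathcal{J}_i| + 1)/2 - 1$ for all $1 \leq i \leq L$, including the initial case $i=1$, since the length formula depends only on $m$ and not on $i$ directly.
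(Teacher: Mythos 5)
Your proposal is correct and follows essentially the same route as the paper's own proof: compute the overflow length exactly from the insertion rule as initial gap plus $m$ points plus interior gaps, split on the parity of $\ell$ and then of $m$, obtain $\ell(m+1)/2-1$ in the even-$\ell$ and odd-$\ell$/odd-$m$ cases and $(\ell(m+1)-3)/2$ when $\ell$ is odd and $m$ is even, and finish with $m \leq |\mathcal{J}_i|$ and monotonicity. Your gap counts ($\lfloor m/2\rfloor$ gaps of length $(\ell-3)/2$ and the rest of length $(\ell-1)/2$) match the paper's tallies, and the claimed deficit of $\tfrac12$ in the even-$m$ subcase is exactly what the paper's arithmetic gives.
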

\begin{proof}
First, suppose $\ell$ is even. Using the notation above, the overflow consists of $\ell - 2$ initial values followed by the values in $Y$, each separated by $(\ell - 2)/2$ points. Let $|Y| = m$. Then the overflow has length
\[
\frac{(m-1)(\ell-2)}{2} + m + \ell - 2 = \frac{\ell(m + 1)}{2} - 1.
\]
Since $m \leq |\mathcal{J}_i|$, it follows that
\[
|\mathcal{O}_i| \leq \frac{\ell(|\mathcal{J}_i| + 1)}{2} - 1.
\]

Now suppose $\ell$ is odd. Then the overflow consists of $\ell - 2$ initial values followed by the values in $Y$ separated by $(\ell - 3)/2$ or $(\ell - 1)/2$ points, alternating. Let $|Y| = m$. If $m$ is odd then the overflow has length
\[
\frac{(m-1)}{2} \times \frac{(\ell-3)}{2} + \frac{(m-1)}{2} \times \frac{(\ell - 1)}{2} + m + \ell - 2 = \frac{\ell(m + 1)}{2} - 1.
\]
Otherwise, $m$ is even so the overflow has length
\[
\frac{m}{2} \times \frac{(\ell-3)}{2} + \frac{(m-2)}{2} \times \frac{(\ell-1)}{2} + m + \ell - 2 = \frac{\ell(m + 1) - 3}{2}.
\]
Since 
\[
\frac{\ell(m + 1) - 3}{2} < \frac{\ell(m + 1)}{2} - 1,
\]
and $m \leq |\mathcal{J}_i|$, it follows that 
\[
|\mathcal{O}_i| \leq \frac{\ell(|\mathcal{J}_i| + 1)}{2} - 1
\]
for all $i \geq 1$.
\end{proof}


\begin{corollary}
\label{overflow.cor}
For any integer $i$ such that  $2 \leq i \leq L$, we have 
\[ |\mathcal{O}_i| \leq \frac{i(\ell^2 - \ell)}{2} + \frac{3 \ell^3 - 7 \ell^2 +6\ell - 4 }{4}.\] 
Also, \[|\mathcal{O}_1| \leq \frac{\ell^3 - \ell^2 +2\ell-4}{4}.\]
\end{corollary}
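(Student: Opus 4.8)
The plan is to obtain both bounds by direct substitution, with no remaining combinatorial content: I would feed the values of $|\mathcal{J}_i|$ computed in Lemma~\ref{J.lemma} into the inequality $|\mathcal{O}_i| \leq \ell(|\mathcal{J}_i|+1)/2 - 1$ from Lemma~\ref{overflow.lem}, and then simplify. Since Lemma~\ref{J.lemma} splits into the generic case and the initial case, I would mirror that split and treat $2 \leq i \leq L$ and $i = 1$ separately.

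For $2 \leq i \leq L$, I would first expand the product $(\ell-1)(i + (3\ell-4)/2)$. Writing $(\ell-1)(3\ell-4)/2 = (3\ell^2 - 7\ell + 4)/2$ gives $|\mathcal{J}_i| = (\ell-1)i + (3\ell^2 - 7\ell + 4)/2$, so that $|\mathcal{J}_i| + 1 = (\ell-1)i + (3\ell^2 - 7\ell + 6)/2$. I would then multiply by $\ell/2$, deliberately keeping the term linear in $i$ separate from the rest:
\[
\frac{\ell(|\mathcal{J}_i|+1)}{2} = \frac{i(\ell^2 - \ell)}{2} + \frac{3\ell^3 - 7\ell^2 + 6\ell}{4}.
\]
Subtracting the trailing $1 = 4/4$ then folds the constant into the numerator of the second fraction, yielding exactly the claimed bound $i(\ell^2-\ell)/2 + (3\ell^3 - 7\ell^2 + 6\ell - 4)/4$.

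For the initial case $i = 1$, I would use $|\mathcal{J}_1| = \binom{\ell}{2} = (\ell^2 - \ell)/2$, so that $|\mathcal{J}_1| + 1 = (\ell^2 - \ell + 2)/2$. Multiplying by $\ell/2$ gives $(\ell^3 - \ell^2 + 2\ell)/4$, and subtracting $1$ produces $(\ell^3 - \ell^2 + 2\ell - 4)/4$, which is the second stated bound.

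I do not expect a genuine obstacle here; the corollary is an immediate consequence of the two preceding lemmas, and the only thing requiring care is the arithmetic bookkeeping. In particular, I would want to group the $i$-linear contribution cleanly as $i(\ell^2-\ell)/2$ and correctly absorb the final $-1$ into the constant polynomial, so that the $6\ell$ in the numerator over $4$ becomes $6\ell - 4$. I would verify the result by expanding, collecting the coefficients of each power of $\ell$, and checking them against the two target expressions.
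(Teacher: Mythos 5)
Your proposal is correct and matches the paper's own proof exactly: both simply substitute the values of $|\mathcal{J}_i|$ from Lemma~\ref{J.lemma} into the bound $|\mathcal{O}_i| \leq \ell(|\mathcal{J}_i|+1)/2 - 1$ of Lemma~\ref{overflow.lem} and simplify, treating $2 \leq i \leq L$ and $i=1$ separately. The arithmetic in your expansion checks out in both cases.
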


\begin{proof}
Applying Lemmas \ref{J.lemma} and \ref{overflow.lem}, we obtain
\begin{eqnarray*}
|\mathcal{O}_i| & \leq &\frac{\ell \left( (\ell-1) \left(i +  \frac{3\ell - 4}{2}\right) + 1\right)}{2} - 1\\
& = & \frac{i(\ell^2 - \ell)}{2} + \frac{3 \ell^3 - 7 \ell^2 +6\ell - 4 }{4}
\end{eqnarray*}
and
\begin{eqnarray*}
|\mathcal{O}_1| &\leq& \frac{ \ell \left( \binom{l}{2} + 1 \right)}{2} - 1\\
& = & \frac{\ell^3 - \ell^2 +2\ell-4}{4}.
\end{eqnarray*}

\end{proof}

\subsection{The gap}

After carrying out the operations described in Figure \ref{overflow.fig}, we fill in the rest of the overflow
$\mathcal{O}_{i}$ using what we call the ``modified greedy strategy.'' Each time we choose a new point $x_j$, 
we make sure that $x_j \not\in P_{j,\ell}$, as per (\ref{third.eq}). However, we additionally need to make sure that there is no block contained in a set of $\ell$ consecutive points that may include points $x_{j'}$ with $j' > j$ that have been predefined as a result of the algorithm in Figure \ref{overflow.fig}. In order to ensure that this can be done, we include a \emph{gap}, denoted $\mathcal{G}$, 
that follows the last overflow, $\mathcal{O}_{L}$. 
$\mathcal{G}$ will contain elements after $\mathcal{O}_{L}$, up to, but not including, the last $L$ points in the sequencing. The gap will be filled using the greedy strategy. See Figure \ref{gap.fig}.

\begin{figure}
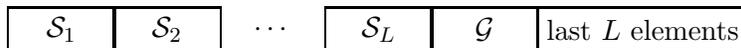

\begin{center}
\fbb{$\mathcal{S}_{1}$}\fbb{$\mathcal{S}_{2}$}\fb{$\cdots$}\fbb{$\mathcal{S}_{L}$}\fbb{$\mathcal{G}$}\fbbl{last $L$ elements}
\end{center}
\caption{The overall structure of the sequencing}
\label{gap.fig}
\end{figure}

Let's determine how big the gap needs to be. First, consider 
the second last element of $\mathcal{O}_{L}$. The last element of $\mathcal{O}_{L}$, 
say $x_{\kappa}$ has been pre-specified to be the value $y_m$. Now, as we have already mentioned, 
$x_{\kappa-1} \not\in P_{\kappa-1,\ell}$, which rules out no more than $L$ values for $x_{\kappa-1}$.
Also, \[ x_{\kappa-1} \not\in \{ \third(x_{j},x_{\kappa}) : \kappa-\ell+1 \leq j \leq \kappa -2\}.\]
This rules out up to $\ell-2$ additional values for $x_{\kappa-1}$.
The number of unused values is $|\mathcal{G}| + L + 1$, since we have not yet defined 
$x_{\kappa-1}$, any element in the gap, or any of the last $L$ elements. So we require
$ L + \ell-2 +1 \leq |\mathcal{G}| + L +1$, or $|\mathcal{G}| \geq \ell-2$, in order to ensure that
$x_{\kappa-1}$ can be defined. 

We should also consider the element immediately preceding 
$y_{m-1} = x_{\kappa}$. Following $x_{\kappa}$, there is are $\beta$ undefined elements, followed by
$y_m$, where 
\[ \beta \in \left\{ \frac{\ell-1}{2}, \frac{\ell-2}{2}, \frac{\ell-3}{2} \right\}.\]
Suppose we have defined all elements up to but not including $x_{\kappa-1}$. Also, the values $x_{\kappa}$
and $x_{\kappa+\beta + 1}$ have been prespecified.

The restrictions on $x_{\kappa-1}$ are as follows:
\begin{itemize}
\item $x_{\kappa-1} \not\in P_{\kappa-1,\ell}$ (as before, which rules out at most $L$ values),
\item $x_{\kappa-1} \not\in \{ \third(x_{j},x_{\kappa}) : \kappa-\ell+1 \leq j \leq \kappa -2\}$ (as before, which rules out at most $\ell-2$ values), 
\item $x_{\kappa-1} \neq \third(x_{\kappa},x_{\kappa + \beta + 1})$ (at most one value is ruled out here)
\item $x_{\kappa-1} \not\in \{ \third(x_{j},x_{\kappa + \beta+1}) : \kappa +\beta -\ell+2 \leq j \leq \kappa -2\}$
(at most $\ell - \beta - 3$ values are ruled out).
\end{itemize}
Therefore the total number of values that are ruled out is at most
\[ L + \ell-2 + 1 + \ell - \beta - 3 = L + 2\ell - \beta - 4.\]
Since the $\beta$ elements between
$x_{\kappa}$ and $x_{\kappa+\beta + 1}$ have not yet been defined, the number of available elements is 
$|\mathcal{G}| + L + \beta + 1$. 
Therefore we can choose a value for $x_{\kappa-1}$  provided that
\[ |\mathcal{G}| + L + \beta + 1 \geq L + 2\ell - \beta - 4 + 1,\]
which simplifies to give
\[ |\mathcal{G}| \geq 2 (\ell - \beta - 2) .\]

If $\ell$ is even, then $\beta = (\ell-2)/2$
and it suffices to take
\[ |\mathcal{G}| \geq 2 \left( \ell - \frac{\ell-2}{2} - 2\right) = \ell - 2.\]
If  $\ell$ is odd, then we have  $\beta \geq (\ell-3)/2$ and  it suffices to take
\[ |\mathcal{G}| \geq 2 \left( \ell - \frac{\ell-3}{2} - 2\right) = \ell - 1.\]

Thus we have proven the following.
\begin{lemma}
\label{gap.lem}
If $\ell$ is even, then the gap $\mathcal{G}$ can have any  length $\geq \ell -2$, and  
if $\ell$ is odd, then the gap $\mathcal{G}$ can have any  length $\geq \ell -1$.
\end{lemma}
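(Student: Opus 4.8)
The plan is to fill in the last overflow $\mathcal{O}_L$ using the modified greedy strategy, and to determine the minimum size of the trailing gap $\mathcal{G}$ that guarantees every remaining position can be assigned a valid point. The subtlety is that the overflow has certain positions \emph{pre-specified} (the points $y_1, \dots, y_m$ inserted by the algorithm of Figure~\ref{overflow.fig}), so when we greedily choose an undefined point we must avoid not only the ``backward'' forbidden set $P_{j,\ell}$ from~(\ref{third.eq}), but also any block that would be completed together with a \emph{forward} pre-specified point lying within $\ell$ consecutive positions. The gap exists precisely to provide enough slack that these extra forward constraints never exhaust the supply of unused points.

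First I would identify the worst-case position for the greedy choice. Intuitively, the hardest point to define is one that sits immediately before a pre-specified point and also has a second pre-specified point not far ahead, since each such forward point contributes its own family of forbidden third-coordinates. So I would focus on $x_{\kappa-1}$, where $x_\kappa = y_{m-1}$ and $x_{\kappa+\beta+1} = y_m$ are the two final pre-specified points, separated by a gap of $\beta$ undefined positions. Here $\beta$ takes one of the values $(\ell-1)/2$, $(\ell-2)/2$, or $(\ell-3)/2$ coming from the spacing rules. For this position I would tally the forbidden values: at most $L$ from $P_{\kappa-1,\ell}$; at most $\ell-2$ of the form $\third(x_j, x_\kappa)$ for $\kappa-\ell+1 \leq j \leq \kappa-2$; at most one value $\third(x_\kappa, x_{\kappa+\beta+1})$; and at most $\ell-\beta-3$ values of the form $\third(x_j, x_{\kappa+\beta+1})$ with $j$ ranging over the appropriate window. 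Summing these gives at most $L + 2\ell - \beta - 4$ forbidden values.

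Next I would count the available (still undefined) points: since $x_{\kappa-1}$, the $\beta$ positions in the interior gap, the elements of $\mathcal{G}$, and the final $L$ points of the sequencing are all as yet unassigned, there are $|\mathcal{G}| + L + \beta + 1$ candidates. Requiring that the number of candidates strictly exceed the number of forbidden values yields $|\mathcal{G}| + L + \beta + 1 \geq L + 2\ell - \beta - 4 + 1$, which rearranges to $|\mathcal{G}| \geq 2(\ell - \beta - 2)$. I would also verify separately the simpler constraint coming from the very last interior position $x_{\kappa-1}$ adjacent to only a single pre-specified point $x_\kappa$, which forces $|\mathcal{G}| \geq \ell - 2$; this turns out to be dominated by the two-point bound. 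Substituting the largest relevant value of $\beta$ in each parity case then produces the stated thresholds.

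Finally I would conclude by evaluating $2(\ell-\beta-2)$ in the two parities. When $\ell$ is even, $\beta = (\ell-2)/2$ and the bound becomes $\ell-2$; when $\ell$ is odd, the smallest admissible $\beta$ is $(\ell-3)/2$, giving $\ell-1$. I expect the main obstacle to be the careful bookkeeping of the forward constraints: one must be certain that the windows of indices $j$ contributing each family of $\third$ values are counted correctly (in particular that the ranges do not overlap or double-count the pre-specified points themselves) and that no additional block-completing configuration involving two pre-specified forward points has been overlooked. Once the worst case $x_{\kappa-1}$ is shown to be genuinely extremal, every other undefined position in $\mathcal{O}_L$ faces weaker constraints, so the same gap length suffices throughout, completing the proof of Lemma~\ref{gap.lem}.
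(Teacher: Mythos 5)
Your proposal matches the paper's own argument essentially step for step: the same worst-case position $x_{\kappa-1}$ sandwiched between the pre-specified points $y_{m-1}$ and $y_m$, the same tally of at most $L + 2\ell - \beta - 4$ forbidden values against $|\mathcal{G}| + L + \beta + 1$ available ones, the same auxiliary check for the position adjacent to a single pre-specified point, and the same case split on the parity of $\ell$ via the value of $\beta$. This is correct and is the paper's proof.
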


\begin{figure}[tb]
\begin{center}
\begin{tabular}{l}
Input: an \STS$(v)$ and an integer $\ell \geq 3$\\
$L \leftarrow \binom{\ell-1}{2}$\\
{\bf for} $i \leftarrow 1$ {\bf to} $L$ {\bf do}\\
\quad\quad Fill in the values in $\mathcal{B}_{i}^L, \mathcal{C}_{i}$ and  $\mathcal{B}_{i}^R$ using the greedy strategy\\
\quad\quad Compute the set $Y = \{y_1, \dots , y_m\}$.\\
\quad\quad Place the elements in $Y$ into $\mathcal{O}_{i}$ as described in Figure \ref{overflow.fig}.\\
\quad\quad Fill in the rest of $\mathcal{O}_{i}$ using the ``modified greedy strategy.''\\
 Fill in the points in $\mathcal{G}$ using the greedy strategy.\\
Compute $X \setminus \{ x_1, x_2, \dots , x_{v-L}\} = \{ \alpha_1, \dots , \alpha_L \}$.\\
{\bf for} $i \leftarrow 1$ {\bf to} $L$ {\bf do}\\
\quad\quad $x_{v-L+i} \leftarrow \alpha_i$\\
\quad\quad If necessary, swap $x_{v-L+i}$ with one of $\alpha_{i+1}, \dots , \alpha_L$ or a point from 
$\mathcal{C}_i$.\\
{\bf Return} $(\pi = [x_1\; x_2 \;  \cdots \; x_v])$.
\end{tabular}
\end{center}
\caption{Algorithm to find an $\ell$-good sequencing for an \STS$(v)$, $(X,\B)$}
\label{alg}
\end{figure}

\subsection{The algorithm}

Finally, the last $L$ points may be swapped (as described above) in order to ensure that we have an $\ell$-good sequencing.
Putting all the pieces together, we obtain the algorithm presented in Figure \ref{alg}.
The following lemma establishes the correctness of the algorithm.

\begin{lemma}
There is no block contained in $\ell$ consecutive points of $\mathcal{S}_i$ after a swap.
\end{lemma}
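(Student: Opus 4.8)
The plan is to isolate the one fact that makes the overflow construction work: a leftover point $\alpha_i$ swapped into the core of $\mathcal{S}_i$ can never be the third point of a pair of points of $\mathcal{T}_i$ lying within $\ell$ consecutive positions of it. First I would dispose of the trivial case. A swap of type \textbf{(a)} exchanges $\alpha_i$ with some $\alpha_j$ ($j > i$); both occupy positions among the final $L$ of the sequencing and so lie outside every segment, leaving $\mathcal{S}_i$ untouched. Since $\mathcal{S}_i$ was built to contain no block in any $\ell$ consecutive points, nothing needs to be checked. Hence I would concentrate on a swap of type \textbf{(b)}, in which $\alpha_i$ replaces the point $x_j$ at some core position $p$ of $\mathcal{C}_i$.

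Next I would prove a locality claim: every point lying within $\ell$ consecutive positions of the core position $p$ belongs to $\mathcal{T}_i = \mathcal{B}_i^L \cup \mathcal{C}_i \cup \mathcal{B}_i^R$. This is exactly what the buffers are for. Each buffer has length $\ell - 1$, and the core lies between $\mathcal{B}_i^L$ and $\mathcal{B}_i^R$, so any window of $\ell$ consecutive positions meeting $p$ reaches at most $\ell - 1$ positions to either side and therefore cannot pass beyond the outer end of a buffer. The two separation facts already recorded---that (for $i \geq 2$) the first point of $\mathcal{C}_i$ and the last point of $\mathcal{O}_{i-1}$ are not within $\ell$ consecutive positions, and that the last point of $\mathcal{C}_i$ and the first point of $\mathcal{O}_i$ are not within $\ell$ consecutive positions---then, applied to the leftmost and rightmost core positions, upgrade to the statement that no point of an earlier segment and no point of $\mathcal{O}_i$ can lie within $\ell$ consecutive positions of any core position. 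Thus the only points that can combine with $\alpha_i$ to form a block inside a window of length $\ell$ are points of $\mathcal{T}_i$.

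With locality established, the conclusion follows from a third-point argument. Because every position of $\mathcal{S}_i$ other than $p$ keeps its pre-swap value, and the pre-swap segment had no block in $\ell$ consecutive points, any block created by the swap must contain the value $\alpha_i$ now sitting at $p$. Write such a block as $\{\alpha_i, z_{j_1}, z_{j_2}\}$; by locality $z_{j_1}, z_{j_2} \in \mathcal{T}_i$, and since all three points lie in one window of length $\ell$ their positions satisfy $j_2 - j_1 \leq \ell - 1$, so $(j_1, j_2) \in \mathcal{J}_i$. By the construction of $Y$, the point $\third(z_{j_1}, z_{j_2})$ was either inserted into $\mathcal{O}_i$ as an element of $Y$ or was omitted only because it already lay in $\mathcal{T}_i \cup \mathcal{S}_1 \cup \cdots \cup \mathcal{S}_{i-1}$; in either case it occurs in one of the segments. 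But $\alpha_i$ belongs to $X \setminus \{x_1, \dots, x_{v-L}\}$ and so occurs in no segment, whence $\alpha_i \neq \third(z_{j_1}, z_{j_2})$. This contradicts $\{\alpha_i, z_{j_1}, z_{j_2}\}$ being a block, and no block can survive the swap.

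I expect the locality claim of the second paragraph to be the main obstacle, since it requires checking the core positions adjacent to each buffer and handling the degenerate first segment, where there is no left buffer and $\mathcal{T}_1 = \mathcal{C}_1 \cup \mathcal{B}_1^R$ has size $\ell$. Once locality is secured, the third-point argument runs uniformly for every $i$, and the lemma follows.
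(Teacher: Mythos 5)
Your proposal is correct and follows essentially the same route as the paper's proof: any surviving block must contain the swapped-in point $\alpha_i$, its other two points give a pair in $\mathcal{J}_i$, and the third point of that pair already occurs among $x_1,\dots,x_{v-L}$ (in $\mathcal{T}_i$, an earlier segment, or the overflow), so it cannot equal $\alpha_i$. You simply make explicit two things the paper leaves implicit --- the locality role of the buffers and the disposal of type (a) swaps --- which is a reasonable elaboration rather than a different argument.
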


\begin{proof}
Suppose a block $B$ is  contained in $\ell$ consecutive points of $\mathcal{S}_i$ after a swap.
Clearly, $B$ must contain $\alpha_i$, which is the point that was ``swapped in.'' 
 Suppose that $\{z_{j_1}, z_{j_2}, \alpha_i\}$ is such a block, 
 where $j_1 < j_2$. Then $(j_1,j_2) \in \mathcal{J}_i$
 and $\alpha_i = \third (j_1,j_2)$.  However, it must be the case that $\third (j_1,j_2) \in Y$, in which case it occurs in the overflow; or $\third (j_1,j_2) \in \mathcal{T}_i \setminus \{\alpha_i\}$. In each case,
 $\alpha_i \neq \third (j_1,j_2)$, so we have a contradiction.
\end{proof}

\subsection{Analysis}

In this section, we prove our general existence result. 
Recall that we have various components in our sequencing:
\begin{itemize}
\item $L$ segments, 
$\mathcal{S}_i$ ($1 \leq i \leq L$), each consisting of 
\begin{itemize}
\item for $i \geq 2$, a left buffer of size $\ell-1$,
\item a core of size $i$, 
\item a right buffer of size $\ell-1$, and 
\item an overflow, whose size is given in Corollary \ref{overflow.cor}.
\end{itemize}
\item the gap $\mathcal{G}$ of size $\geq \ell-1$, and 
\item the final $L$ elements.
\end{itemize}
Therefore a sequencing of an \STS$(v)$ will exist if $v$ is at least as big as the sum of the lengths of all the components enumerated above:
\begin{eqnarray*}
v &\geq& \sum_{i=1}^L ( |\mathcal{B}_i^L| + |\mathcal{C}_i| + |\mathcal{B}_i^R| + |\mathcal{O}_i|) 
+ |\mathcal{G}| + L \\
& = & |\mathcal{C}_1| + |\mathcal{B}_1^R| + |\mathcal{O}_1| 
+ \sum_{i=2}^L ( |\mathcal{B}_i^L| + |\mathcal{C}_i| + |\mathcal{B}_i^R| + |\mathcal{O}_i|) + |\mathcal{G}| + L \\
& = & 
1 + \ell-1  + |\mathcal{O}_1| + 
\sum_{i=2}^L ( \ell-1 + i + \ell-1 + |\mathcal{O}_i|)
+ \ell-1 + L \\
& = & 
2\ell - 1 + L   + |\mathcal{O}_1| + 
\sum_{i=2}^L ( 2\ell-2 + i  + |\mathcal{O}_i|) \\
& = & 
2\ell - 1 + \binom{\ell-1}{2}   + \frac{\ell^3 - \ell^2 +2\ell-4}{4}\\
& & {} +
\sum_{i=2}^{\binom{\ell-1}{2}} \left( 2\ell-2 + i  + \frac{i(\ell^2 - \ell)}{2} + 
\frac{3 \ell^3 - 7 \ell^2 +6\ell - 4 }{4}\right).
\end{eqnarray*}

After some simplification, the following is obtained.

\begin{theorem}
\label{general.thm}
An \STS$(v)$ with
\begin{equation}
\label{general-bound}
v \geq \frac{(\ell-1)(\ell^5 - 9\ell^3 + 20\ell^2 - 36\ell + 16)}{16}
\end{equation}
has an $\ell$-good sequencing.
\end{theorem}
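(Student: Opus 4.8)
The plan is to show that the algorithm of Figure \ref{alg}, whose correctness is guaranteed by the preceding lemmas, always terminates successfully whenever the order $v$ is at least the combined length of all the structural components it lays down. First I would observe that the sequencing is partitioned into the $L = \binom{\ell-1}{2}$ segments $\mathcal{S}_1, \dots, \mathcal{S}_L$, followed by the gap $\mathcal{G}$ and the final $L$ positions reserved for the $\alpha_i$. Since these pieces are pairwise disjoint and together exhaust the sequence, a sufficient condition for the construction to go through is that $v$ be at least as large as the sum of their sizes.

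Next I would assemble the length bound from the sizes established earlier: each segment $\mathcal{S}_i$ contributes a left buffer of size $\ell-1$ (size $0$ when $i=1$), a core of size $c_i = i$, a right buffer of size $\ell-1$, and an overflow whose length is bounded in Corollary \ref{overflow.cor}; the gap contributes at most $\ell-1$, which suffices for both parities by Lemma \ref{gap.lem}; and the reserved suffix contributes $L$. Separating the anomalous first segment (no left buffer, overflow bounded by the second displayed expression of Corollary \ref{overflow.cor}) from the generic segments $2 \leq i \leq L$ gives exactly the summation displayed immediately before the theorem statement.

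The substantive step is then the evaluation of this sum in closed form. I would split the summand $2\ell - 2 + i + |\mathcal{O}_i|$ for $2 \leq i \leq L$ into its part that is constant in $i$ and its part that is linear in $i$, using the bound $|\mathcal{O}_i| \leq \frac{i(\ell^2-\ell)}{2} + \frac{3\ell^3 - 7\ell^2 + 6\ell - 4}{4}$. The constant-in-$i$ contribution is multiplied by the number of terms, $L-1$, while the linear contribution is summed via $\sum_{i=2}^{L} i = \frac{L(L+1)}{2} - 1$. After carrying out these two elementary summations and then substituting $L = \binom{\ell-1}{2} = \frac{(\ell-1)(\ell-2)}{2}$ throughout, the right-hand side collapses to a polynomial in $\ell$.

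The dominant term arises from $\sum_{i=2}^L \frac{i(\ell^2-\ell)}{2}$, which is of order $L^2 \ell^2 / 4 \approx \ell^6/16$, so I expect the final bound to be a degree-six polynomial with leading coefficient $1/16$, consistent with (\ref{general-bound}). The main obstacle is purely the bookkeeping: one must track every lower-order term through the substitution $L = (\ell-1)(\ell-2)/2$ so that the various quartic, cubic, and quadratic pieces combine correctly into the stated factored form $(\ell-1)(\ell^5 - 9\ell^3 + 20\ell^2 - 36\ell + 16)/16$. I would verify the final algebra by expanding the claimed bound and matching it against the summed expression term by term, and also sanity-check the leading coefficient independently via the asymptotic estimate above; no new combinatorial idea is required beyond what the lemmas already supply.
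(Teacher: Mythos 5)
Your proposal follows essentially the same route as the paper: decompose the sequencing into the $L$ segments (treating $\mathcal{S}_1$ separately for its missing left buffer and smaller overflow bound), the gap of size $\ell-1$, and the final $L$ positions, then sum the component sizes using Corollary \ref{overflow.cor} and Lemma \ref{gap.lem} and simplify the resulting polynomial in $L = \binom{\ell-1}{2}$. The paper itself only displays this summation and states that simplification yields the bound, so your outline, including the correct identification of the dominant $\ell^6/16$ term, matches its proof.
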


Here is a simpler bound that follows from Theorem \ref{general.thm}.

\begin{corollary}
An \STS$(v)$ with
$v \geq \ell^6/16$ has an $\ell$-good sequencing.
\end{corollary}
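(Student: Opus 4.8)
The plan is to deduce the corollary directly from Theorem \ref{general.thm} by showing that the hypothesis $v \geq \ell^6/16$ already forces the bound (\ref{general-bound}). Since the two bounds share the denominator $16$, this reduces to the purely algebraic claim
\[
(\ell-1)\bigl(\ell^5 - 9\ell^3 + 20\ell^2 - 36\ell + 16\bigr) \leq \ell^6
\]
for every integer $\ell \geq 3$. Once this is established, any $v$ satisfying $v \geq \ell^6/16$ also satisfies (\ref{general-bound}), and Theorem \ref{general.thm} then supplies the required $\ell$-good sequencing.

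First I would bound each factor of the left-hand side against the corresponding factor of $\ell^6 = \ell \cdot \ell^5$. The first factor is handled trivially by $\ell - 1 \leq \ell$. For the second factor, I would show that the ``correction'' term is nonpositive on the relevant range, namely
\[
-9\ell^3 + 20\ell^2 - 36\ell + 16 \leq \ell^2(20 - 9\ell) \leq 0,
\]
where the first inequality uses $-36\ell + 16 \leq 0$ (valid for $\ell \geq 1$) and the second uses $20 - 9\ell \leq 20 - 27 < 0$ for $\ell \geq 3$. This gives $\ell^5 - 9\ell^3 + 20\ell^2 - 36\ell + 16 \leq \ell^5$.

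To multiply the two bounds legitimately I also need both factors of the product to be nonnegative, so I would check that the quintic $\ell^5 - 9\ell^3 + 20\ell^2 - 36\ell + 16$ is positive for $\ell \geq 3$; this is clear since $\ell^5 - 9\ell^3 = \ell^3(\ell^2 - 9) \geq 0$ and the quadratic remainder $20\ell^2 - 36\ell + 16$ is positive for $\ell \geq 1$ (its roots are $0.8$ and $1$). With both factors positive and bounded above by $\ell$ and $\ell^5$ respectively, their product is at most $\ell^6$, which is exactly the displayed inequality; dividing by $16$ finishes the argument. The only mild obstacle is the sign bookkeeping---one must confirm that neither factor changes sign on $\ell \geq 3$ before comparing the products---but since every comparison reduces to an elementary polynomial estimate that holds with room to spare at $\ell = 3$ and only improves as $\ell$ grows, no real difficulty arises, and the corollary follows at once.
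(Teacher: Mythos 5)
Your proof is correct and follows essentially the same route as the paper: both reduce the corollary to the algebraic inequality $(\ell-1)(\ell^5 - 9\ell^3 + 20\ell^2 - 36\ell + 16) \leq \ell^6$ and establish it by bounding the two factors by $\ell$ and $\ell^5$ respectively. The only difference is cosmetic --- the paper certifies that the correction term $9\ell^3 - 20\ell^2 + 36\ell - 16$ is positive for $\ell \geq 3$ by locating its single real root near $0.584$, whereas you use an elementary term-by-term estimate and make explicit the sign check needed before multiplying the two factor bounds.
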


\begin{proof}
Consider the polynomial
\[
9\ell^3-20\ell^2+36\ell-16.
\]
This polynomial has a single root at $\ell \approx 0.58421$. Since $\ell \geq 3$, we know that $9\ell^3-20\ell^2+36\ell-16 > 0$, from which it follows that
\[
\ell^5 - 9\ell^3 + 20\ell^2 - 36\ell + 16 < \ell^5.
\]
Clearly, $\ell - 1 < \ell$, so 
\[(\ell-1)(\ell^5 - 9\ell^3 + 20\ell^2 - 36\ell + 16) < \ell^6\] for $\ell \geq 3$.
Hence,  (\ref{general-bound}) holds, and the result follows from Theorem \ref{general.thm}.
\end{proof}

For small values of $\ell$, we obtain the explicit bounds on $n(\ell)$ given in Table \ref{bounds.tab}.
We obtain slightly stronger bounds than Theorem \ref{general.thm} by using a gap of size $\ell-2$ when feasible 
(see Lemma \ref{gap.lem}) and a more precise bound on the size of 
the overflow $\mathcal{O}_i$ when $\ell$ is odd and $|\mathcal{J}_i|$ is even, 
as described in the proof of Lemma \ref{overflow.lem}.

Note that the upper bound on $n(4)$ is not as good as the one proven in \cite{KS2}. Of course, the result
from \cite{KS2} is obtained from an algorithm that was specially designed for the case $\ell=4$.

\begin{table}
\caption{Upper bounds on $n(\ell)$}
\label{bounds.tab}
\begin{center}
\begin{tabular}{r | r}
\multicolumn{1}{c|}{$\ell$} & \multicolumn{1}{c}{$n(\ell) \leq $} \\
\hline
4 & 119 \\
5 & 556 \\
6 & 1984 \\
7 & 5270 \\
8 & 12760 \\
9 & 26400 \\
10 & 52118
\end{tabular}
\end{center}
\end{table}

\section{Discussion and Conclusion}

Our algorithm is based on ideas from \cite{KS2}, where an algorithm to find a $4$-good sequencing of an 
\STS$(v)$ was developed. The algorithm from \cite{KS2} also employed the ``greedy strategy,'' ``modified greedy strategy'' and  an overflow (although the latter term was not used in \cite{KS2}) in much the same way as the present algorithm. In \cite{KS2}, only a single overflow and swap was needed, which meant that the algorithm would work for smaller values of $v$ than the general algorithm we describe in this paper. However, the approach in \cite{KS2} did not seem to generalize well to larger values of $\ell$, so the algorithm we have presented here employs a series of (up to) $L$ swaps that take place in disjoint intervals. This permits the development of an algorithm for arbitrary values of $\ell$. 

It would of course be of interest to obtain more accurate upper and lower bounds on $\ell$ (as a function of $v$)
for the existence of $\ell$-good sequencings of \STS$(v)$. Phrased in terms of asymptotic complexity, 
our necessary condition is that
$\ell$ is $O(v)$, while the sufficient condition proven in this paper is that $\ell$ is $\Omega(v^{1/6})$. Closing this gap is an interesting open problem.

\end{document}